\newtheorem{theorem}{Theorem}[section]
\newtheorem{lemma}[theorem]{Lemma}
\newtheorem{proposition}[theorem]{Proposition}
\newtheorem{corollary}[theorem]{Corollary}
\theoremstyle{definition}
\newtheorem{definition}[theorem]{Definition}
\numberwithin{equation}{section}
\author{Shohei Satake}
\address{
Graduate School of System Informatics, Kobe University \\
Rokkodai 1-1, Nada, Kobe, 657-8501, JAPAN}
\email{155x601x@stu.kobe-u.ac.jp}
\thanks{The author is supported by Grant-in-Aid for JSPS Fellows 18J11282 of the Japan Society for the Promotion of Science.}
\keywords {Cayley digraphs, CI-group, doubly regular tournaments, ranking tournaments, skew Hadamard difference sets}
\subjclass[2010]{05C20}
\begin{document}

\title[A constructive solution to a problem of ranking tournaments]
{A constructive solution to a problem of ranking tournaments}

\maketitle

\begin{abstract}
A tournament is an oriented complete graph. 
The problem of ranking tournaments was firstly investigated by P. Erd\H{o}s and J. W. Moon.
By probabilistic methods, the existence of ^^ ^^ unrankable'' tournaments was proved.
On the other hand, they also mentioned the problem of explicit constructions. 
However, there seems to be only a few of explicit constructions of such tournaments.
In this note, we give a construction of many such tournaments by using skew Hadamard difference sets which have been investigated in combinatorial design theory.
\end{abstract}

\section{Introduction} 
For a digraph $D$, let $V(D)$ and $E(D)$ be the vertex and the edge set of $D$, respectively. 
And for two distinct vertices $x$ and $y$, let the ordered pair $(x, y)$ denote the edge directed from $x$ to $y$. 
Let $T$ be a tournament with $n$ vertices and let $\sigma$ be a bijection from $V(T)$ to $\{1, 2, \ldots, n\}$.
An edge $(x, y)$ of $T$ is called {\it consistent} with $\sigma$ if $\sigma(x)<\sigma(y)$. 
$C(T, \sigma)$ is defined as the number of consistent edges with $\sigma$ and $C(T):=\max_{\sigma}C(T, \sigma)$.
These concepts was from paired comparisons in statistics (see e.g. \cite{KS40}). 
It is reasonable to find the most suitable rankings, that is, the bijection with the maximum number of consistent edges.
First we see that for every tournament $T$ with $n$ vertices,
\[
\frac{1}{2}\binom{n}{2} \leq C(T)\leq \binom{n}{2}.
\] 
The first inequality follows by
\begin{equation}
\label{eq:plus}
C(T, \sigma)+C(T, \sigma')=\binom{n}{2},
\end{equation}
where $\sigma'$ is the reversed ranking of $\sigma$ which is defined as $\sigma'(v):=n-\sigma(v)+1$ for each $v \in V(T)$.
And in the second inequality, the equality holds if and only if $T$ is a transitive tournament.
Thus it seems to be natural to consider the worst case. 
In \cite{EM65}, it was proved that for any $\varepsilon>0$, random tournaments $\mathcal{T}_n$ with $n$ vertices satisfy the following property with high probability:
\[
C(\mathcal{T}_n)\leq \Bigl(\frac{1}{2}+\varepsilon \Bigr)\binom{n}{2}.
\]
Moreover, Spencer~\cite{S71}, \cite{S80} and de la Vega~\cite{d83} proved that for sufficiently large $n$, there exist some constant numbers $c_1$ and $c_2$ such that
\begin{equation}
\label{eq:prob}
\frac{1}{2}\binom{n}{2}+c_1n^{\frac{3}{2}} \leq \min_T C(T) \leq \frac{1}{2}\binom{n}{2}+c_2n^{\frac{3}{2}}
\end{equation}
where the minimum is taken over all tournaments with $n$ vertices.
\par The problem constructing explicit such tournaments was mentioned in Erd\H{o}s-Moon~\cite{EM65} and Spencer~\cite{S85}.
However, at this point, there seems to be almost no explicit construction of such tournaments except for Paley tournaments.
Here, for a prime $p \equiv 3 \pmod{4}$, the {\it Paley tournament} $T_p$ is the tournament with vertex set $\mathbb{F}_p$, the finite field of $p$ elements, 
and edge set formed by all edges $(x, y)$ such that $x-y$ is a non-zero square of $\mathbb{F}_p$. 
In \cite[Theorem 9.1.1]{AS16}, it was proved that 
\begin{equation}
\label{eq:paley}
C(T_p) \leq \frac{1}{2}\binom{p}{2}+p^{\frac{3}{2}}\log_2 (2p).
\end{equation}
As shown in \cite{A06}, such explicit examples can be applied, for example, for a derandomized proof of the NP-hardness of the feedback arc set problem for tournaments 
which was firstly proved under randomized reduction in \cite{ACN08}.      
\par In this note, we give a generalized construction of such tournaments.  
We note that the proof of (\ref{eq:paley}) in \cite{A06} and \cite{AS16} contains a discussion which can be applied only for Paley tournaments.
In the present author's paper~\cite{S18}, we generalize that discussion to more general cases by focusing on digraph spectra and a digraph-version of the expander-mixing lemma. 
Here we focus on doubly regular tournaments and slightly improve the result for doubly regular tournaments in \cite{S18} by an alternative proof. 
Moreover, we give exponentially many non-isomorphic doubly regular tournaments obtained by a known construction of skew Hadamard difference sets.  

\section{Ranking doubly regular tournaments}
In this section, we show that doubly regular tournaments are desired tournaments.
This was also mentioned in the present author's paper~\cite{S18}.
Here we give a different proof.  
\par First we give the definition of doubly regular tournaments.
A digraph $D$ is called $d$-regular if in-degree and out-degree of each vertex is $d$.
And for two distinct vertices $x$ and $y$, let $N^+(x, y)$ (resp. $N^-(x, y)$) be the set of vertices $z$ such that $(x, z) , (y, z) \in E(D)$ (resp. $(z, x) , (z, y) \in E(D)$).
\begin{definition}
A tournament $T$ with $n$ vertices is called a {\it doubly regular tournament} if $T$ is $(n-1)/2$-regular and 
for any distinct two vertices $x$ and $y$, $|N^+(x,y)|=|N^-(x, y)|=(n-3)/4$.
\end{definition}
We basically use the discussion in \cite[Section 9.1]{AS16} but we need to show a new upper bound of $e(A, B)-e(B, A)$ 
where for a digraph $D$ and disjoint subsets $A, B \subset V(D)$, $e(A, B)$ is defined as
\begin{equation}
e(A, B):=\bigl|\{(a, b)\in E(D) \mid a \in A, \; b \in B\} \bigr|.
\end{equation}
In fact, the upper bound in \cite{AS16} holds only for Paley tournaments 
(the proof uses the properties of the quadratic residue character). 
To obtain such bound, we need to consider the adjacency matrix of a digraph.
The {\it adjacency matrix} $M_D$ of a digraph $D$ with vertices is the $\{0, 1\}$-square matrix of size $n$ 
whose rows and columns are indexed by the vertices of $D$ and the $(x, y)$-entry is equal to $1$ if and only if $(x, y) \in E(D)$.
Now we are ready to show our desired bound for doubly regular tournaments.

\begin{lemma}
Let $T$ be a doubly regular tournament with $n$ vertices.
Then for any disjoint two subsets $A, B \subset V(T)$, 
\begin{equation}
\label{eq:e}
e(A, B)-e(B, A) \leq \sqrt{n \cdot |A| \cdot |B|}.
\end{equation}
\end{lemma}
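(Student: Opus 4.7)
The plan is to bound $e(A,B)-e(B,A)$ by a spectral/Cauchy--Schwarz argument applied to the skew-symmetric part $N := M_T - M_T^{\top}$ of the adjacency matrix, after first exploiting the doubly regular structure to pin down the spectrum of $N$ exactly.

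First, I would rewrite the quantity of interest as a quadratic form: if $\mathbf{1}_A, \mathbf{1}_B$ are the indicator column vectors of $A, B$, then $e(A,B) = \mathbf{1}_A^{\top} M_T \mathbf{1}_B$ and $e(B,A) = \mathbf{1}_B^{\top} M_T \mathbf{1}_A = \mathbf{1}_A^{\top} M_T^{\top} \mathbf{1}_B$, hence
\begin{equation*}
e(A,B) - e(B,A) = \mathbf{1}_A^{\top} N \mathbf{1}_B.
\end{equation*}
Here $N$ is a skew-symmetric $\{0,\pm 1\}$ matrix with zero row and column sums, so $N\mathbf{1} = 0$ and $\mathbf{1}^{\top} N = 0$.

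The core step is to compute $N^{\top}N = -N^{2}$ explicitly. Writing $M := M_T$, the tournament identity $M + M^{\top} = J - I$ gives $(M+M^{\top})^{2} = (n-2)J + I$. The doubly regular hypothesis yields, via the usual counting-by-adjacency computation on $|N^{+}(x,y)|$ and $|N^{-}(x,y)|$, the identity $MM^{\top} = M^{\top}M = \tfrac{n+1}{4}I + \tfrac{n-3}{4}J$. Subtracting $MM^{\top} + M^{\top}M$ from $(M+M^{\top})^{2}$ then gives $M^{2} + (M^{\top})^{2}$, and assembling $N^{2} = M^{2} + (M^{\top})^{2} - MM^{\top} - M^{\top}M$ should collapse cleanly to
\begin{equation*}
N^{2} = J - nI,
\end{equation*}
so that $-N^{2} = nI - J$ has eigenvalue $0$ on $\mathbf{1}$ and eigenvalue $n$ on the orthogonal complement. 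In other words, every nonzero singular value of $N$ equals $\sqrt{n}$.

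Finally, I would decompose $\mathbf{1}_A = \tfrac{|A|}{n}\mathbf{1} + f_A$ and $\mathbf{1}_B = \tfrac{|B|}{n}\mathbf{1} + f_B$ with $f_A, f_B \perp \mathbf{1}$; because $N$ kills $\mathbf{1}$ on both sides,
\begin{equation*}
\mathbf{1}_A^{\top} N \mathbf{1}_B = f_A^{\top} N f_B.
\end{equation*}
By Cauchy--Schwarz and the singular value bound, $|f_A^{\top} N f_B| \le \sqrt{n}\,\|f_A\|\,\|f_B\|$, and $\|f_A\|^2 = |A|(1 - |A|/n) \le |A|$, similarly for $B$. This yields the claimed inequality (in fact with the slightly stronger factor $\sqrt{(1-|A|/n)(1-|B|/n)}$ in front).

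The only step where anything could go wrong is the spectral computation: I expect the main obstacle to be bookkeeping the constants in $MM^{\top}$, $M^{\top}M$ and $(M+M^{\top})^{2}$ so that the $J$-terms and $I$-terms cancel to give exactly $N^{2}=J-nI$. Once that identity is in hand, the Cauchy--Schwarz finish is routine.
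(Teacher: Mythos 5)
Your proposal is correct and rests on exactly the same key identity as the paper's proof: your $N = M_T - M_T^{\top}$ is precisely the paper's matrix $\tilde{M}$, and your computation $N^{2} = J - nI$ (which does check out, using $MM^{\top} = M^{\top}M = \tfrac{n+1}{4}I + \tfrac{n-3}{4}J$ and $M + M^{\top} = J - I$) is equivalent to the paper's $\tilde{M}\tilde{M}^{\top} = nI - J$. The only difference is packaging --- the paper runs Cauchy--Schwarz entrywise on the double sum $\sum_{i\in A}\sum_{j\in B} m_{ij}$ and invokes the column orthogonality relation directly, whereas you project off $\mathbf{1}$ and use the singular-value bound, which even yields the marginally sharper factor $\sqrt{(1-|A|/n)(1-|B|/n)}$.
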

We remark that when $T$ is $T_p$, (\ref{eq:e}) coincides to the bound of Lemma 9.1.2 in \cite{AS16}.
Since Paley tournaments are doubly regular tournaments, this lemma gives an generalization of the bound in \cite{AS16}.
Moreover, this lemma improves the bound obtained by Corollary 2.3 and Lemma 3.6 in the present author's paper \cite{S18}.

\begin{proof}
Let $M:=M_T$ and $\tilde{M}:=2M-(J_n-I_n)$ where $I_n$ and $J_n$ are the identity matrix and the all-one matrix of order $n$, respectively.
That is, $\tilde{M}$ is the $\{0, \pm 1\}$-matrix obtained from $M$ by replacing all non-diagonal $0$-entries by $-1$.
Let $m_{i j}$ be the $(i, j)$-entry in $\tilde{M}$. 
By the definition of $\tilde{M}$, $m_{i j}=1$ if $(i, j) \in E(T)$ and $m_{i j}=-1$ if $(j, i) \in E(T)$.
Thus for any disjoint two subsets $A, B \subset V(T)$, we see that 
\begin{equation}
\label{eq:e1}
e(A, B)-e(B, A)=\sum_{i \in A}\sum_{j \in B}m_{ij}.
\end{equation}
By the Cauchy-Schwarz inequality, 
\begin{align*}
\begin{split}
\Bigl(\sum_{i \in A}\sum_{j \in B}m_{ij} \Bigr)^2 &\leq |A| \sum_{i \in A} \Bigl(\sum_{j \in B}m_{ij} \Bigr)^2\\
&\leq |A| \sum_{i \in V(T)} \Bigl(\sum_{j \in B}m_{ij} \Bigr)^2\\
&=|A| \sum_{i \in V(T)} \Bigl(|B|+2\sum_{j<l \in B}m_{ij}m_{il} \Bigr)\\
&=|A||B|n+2|A|\sum_{j<l \in B}\sum_{i \in V(T)} m_{ij}m_{il}. 
\end{split}
\end{align*}
The idea of this inequality can be found in the proof of Lemma 9.1.2 in \cite{AS16}.  
Now we can show that for every $1 \leq j\neq l \leq n$,
\begin{equation}
\label{eq:e2}
\sum_{i \in V(T)} m_{ij}m_{il}=-1.
\end{equation}
In fact, from the definition of doubly regular tournaments, we see that
\begin{equation*}
MM^T=\frac{n+1}{4}I_n+\frac{n-3}{4}J_n.
\end{equation*} 
And since $T$ is a tournament and so $M+M^T=J_n-I_n$, a simple calculation shows that 
\begin{equation*}
\tilde{M} \tilde{M}^T=nI_n-J_n.
\end{equation*}
So, by (\ref{eq:e2}), we see that
\begin{equation}
\label{eq:e3}
\Bigl(\sum_{i \in A}\sum_{j \in B}m_{ij} \Bigr)^2 \leq |A||B|n-|A||B|(|B|-1) \leq |A||B|n.
\end{equation}
Thus by (\ref{eq:e1}) and (\ref{eq:e3}), we obtain the lemma.
\end{proof}
\par From this lemma and the argument in \cite[pp.150-151]{AS16}, we get the following lemma. 
\begin{lemma}
\label{cor:exp2}
Let $T$ be a doubly regular tournament with $n$ vertices. Then
\begin{equation}
C(T, \sigma)-C(T, \sigma') \leq  n^{\frac{3}{2}} \log_2 (2n).
\end{equation}
\end{lemma}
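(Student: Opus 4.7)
The plan is to execute the dyadic-partition argument of \cite[pp.\,150--151]{AS16}, substituting the preceding lemma for the Paley-specific character-sum estimate that is used there.

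Fix a bijection $\sigma\colon V(T)\to\{1,\ldots,n\}$. For each level $k=1,2,\ldots,\lceil\log_2 n\rceil$, I would partition $V(T)$ into consecutive $\sigma$-intervals of length roughly $\lceil n/2^k\rceil$, giving at most $2^k$ blocks grouped into $2^{k-1}$ sibling pairs $(L_{k,j},R_{k,j})$. The key combinatorial identity is that each unordered vertex pair $\{u,v\}$ is first separated at a unique level $k$, where the endpoint with smaller $\sigma$-value lies in some $L_{k,j}$ and the other in $R_{k,j}$. Classifying pairs by this first-separation level and noting that a consistent edge contributes $+1$ while an inconsistent edge contributes $-1$, one obtains
\begin{equation*}
C(T,\sigma)-C(T,\sigma') \;=\; \sum_{k=1}^{\lceil\log_2 n\rceil}\sum_{j}\bigl[e(L_{k,j},R_{k,j})-e(R_{k,j},L_{k,j})\bigr].
\end{equation*}

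Next I would apply the preceding lemma to each sibling pair, bounding the inner summand by $\sqrt{n\,|L_{k,j}|\,|R_{k,j}|}$. By AM-GM, $\sqrt{|L_{k,j}|\,|R_{k,j}|}\le (|L_{k,j}|+|R_{k,j}|)/2$, and since the level-$k$ blocks partition $V(T)$, $\sum_j (|L_{k,j}|+|R_{k,j}|)\le n$. Hence level $k$ contributes at most $\sqrt{n}\cdot n/2 = n^{3/2}/2$, and summing over the $\lceil\log_2 n\rceil\le \log_2(2n)$ levels yields a bound of the form $\tfrac{1}{2}n^{3/2}\log_2(2n)$, which is comfortably within the target.

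The main item requiring care is the dyadic bookkeeping when $n$ is not a power of $2$: one has to pick the block endpoints so that the ``first-separation level'' of every pair is still well defined and the displayed identity holds on the nose, without double-counting. This is routine, and any resulting constant-factor slack is absorbed into the replacement of $\log_2 n$ by $\log_2(2n)$ in the final estimate.
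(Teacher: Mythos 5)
Your proposal is correct and is essentially the paper's own proof: the paper establishes Lemma 2.3 precisely by invoking the dyadic bisection argument of \cite[pp.~150--151]{AS16} with the preceding lemma substituted for the Paley character-sum bound, which is exactly what you carry out. Your explicit identity, the AM--GM step, and the accounting of at most $\lceil \log_2 n\rceil \le \log_2(2n)$ levels all check out (and in fact yield the slightly stronger constant $\tfrac12 n^{3/2}\log_2(2n)$).
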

By (\ref{eq:plus}) and Lemma~\ref{cor:exp2}, we immediately obtain the following theorem which gives a generalization of (\ref{eq:paley}).
\begin{theorem}
\label{thm:DRT}
Let $T$ be a doubly regular tournament with $n$ vertices. Then,
\begin{equation}
\label{eq:DRT}
C(T)\leq \frac{1}{2}\binom{n}{2}+n^{\frac{3}{2}} \log_2 (2n).
\end{equation}
\end{theorem}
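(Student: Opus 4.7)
The plan is to combine the two previously-established facts directly, with essentially no extra work. The identity (\ref{eq:plus}) tells us that for every ranking $\sigma$ of $V(T)$, writing $\sigma'$ for its reverse, we have $C(T,\sigma)+C(T,\sigma')=\binom{n}{2}$. Meanwhile Lemma~\ref{cor:exp2} supplies $C(T,\sigma)-C(T,\sigma')\le n^{3/2}\log_2(2n)$.

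First I would fix a ranking $\sigma_{\star}$ that realises the maximum, so $C(T)=C(T,\sigma_{\star})$. Applying the identity to this particular $\sigma_{\star}$ yields $C(T,\sigma_{\star}')=\binom{n}{2}-C(T)$. Substituting both expressions into the bound of Lemma~\ref{cor:exp2} gives
\[
C(T)-\left(\binom{n}{2}-C(T)\right)\le n^{3/2}\log_2(2n),
\]
i.e.\ $2C(T)\le\binom{n}{2}+n^{3/2}\log_2(2n)$, from which (\ref{eq:DRT}) follows (in fact with an extra factor of $1/2$ on the error term, so the stated bound is loose but correct).

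There is no real obstacle here, since the content of the argument has been absorbed into Lemma~\ref{cor:exp2} (which in turn rests on the spectral identity $\tilde{M}\tilde{M}^{T}=nI_n-J_n$ established in the previous section) and the elementary counting identity (\ref{eq:plus}). The only thing to be careful about is the direction of the inequality: one must apply Lemma~\ref{cor:exp2} to the optimal $\sigma_{\star}$ rather than an arbitrary ranking, so that $C(T,\sigma_{\star}')=\binom{n}{2}-C(T)$ can legitimately be substituted on the left-hand side.
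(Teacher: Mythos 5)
Your argument is exactly the one the paper intends: combine the counting identity (\ref{eq:plus}) with Lemma~\ref{cor:exp2} applied to an optimal ranking $\sigma_{\star}$, which the paper states ``immediately'' yields the theorem. Your observation that this actually gives the stronger bound with a factor $\tfrac{1}{2}$ on the error term is also correct.
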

We remark that this theorem can not give the asymptotically best possible upper bound of $|\min_{T}C(T)-\binom{n}{2}/2|$ obtained by (\ref{eq:prob}).
However, by this theorem, we see that all doubly regular tournaments give the best known constructive upper bound obtained by (\ref{eq:paley}).

\section{Doubly regular tournaments from skew Hadamard difference sets}
In this section, we explain that doubly regular tournaments can be obtained from skew Hadamard difference sets.
We also give exponentially many non-isomorphic examples by using a known construction of skew Hadamard difference sets.
At first, we give the definition of skew Hadamard difference sets.
\begin{definition}
Let $\Gamma$ be an abelian group of order $n$. We denote the operation additively and let $0$ be the identity. 
Then, a subset $D \subset \Gamma \setminus \{0\}$ is called an {\it Hadamard difference set} in $\Gamma$ if 
$|D|=(n-1)/2$ and for each $g \in \Gamma \setminus \{0\}$, $g$ appears exactly $(n-3)/4$ times 
in the sequence $(d_1-d_2)_{d_1, d_2 \in D,\; d_1\neq d_2}$.
An Hadamard difference set $D$ in $\Gamma$ is called {\it skew} if $\Gamma=\{0\} \sqcup D \sqcup -D$
where $-D=\{-d \mid d \in D\}$ and $A \sqcup B$ denotes the disjoint union of $A$ and $B$.
\end{definition}
\par Next we explain Cayley digraphs and show that Cayley digraphs defined by skew Hadamard difference sets are doubly regular tournaments.
\begin{definition}
Let $D$ be a subset of $\Gamma \setminus \{0\}$. The {\it Cayley digraph} $Cay(\Gamma, D)$ over $\Gamma$ defined by $D$ 
is the digraph with vertex set $\Gamma$ such that for two vertices $x$ and $y$, $x \rightarrow y$ if $x-y \in D$.
\end{definition}

\begin{proposition}
\label{prop:cayley}
$Cay(\Gamma, D)$ is a doubly regular tournaments if and only if $D$ is a skew Hadamard difference set in $\Gamma$.
\end{proposition}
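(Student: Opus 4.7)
The plan is to translate each of the three conditions in the definitions (tournament, regularity, doubly regular) into an equivalent statement about the subset $D \subset \Gamma$, and observe that together they match exactly the definition of a skew Hadamard difference set.

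First, I would handle the tournament condition. Since $x \to y$ iff $x-y \in D$, the digraph $Cay(\Gamma,D)$ is a tournament precisely when for every pair of distinct vertices $x \neq y$, exactly one of $x-y$ and $y-x = -(x-y)$ lies in $D$. Setting $g = x-y$ range over $\Gamma \setminus \{0\}$, this is equivalent to the partition $\Gamma \setminus \{0\} = D \sqcup (-D)$, i.e.\ the skew condition in the definition of a skew Hadamard difference set.

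Next, I would dispose of regularity. The out-neighbors of $x$ are $\{x - d : d \in D\}$ and the in-neighbors are $\{x + d : d \in D\}$, so every vertex has out-degree and in-degree equal to $|D|$. Thus $Cay(\Gamma,D)$ is $(n-1)/2$-regular iff $|D| = (n-1)/2$, which is part of the Hadamard condition.

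The main step is the double-regularity count, which also turns out to be a direct translation. For distinct vertices $x,y$, a vertex $z$ belongs to $N^+(x,y)$ iff both $x-z$ and $y-z$ lie in $D$; setting $d_1 := x-z$ and $d_2 := y-z$, the map $z \mapsto (d_1,d_2)$ is a bijection between $N^+(x,y)$ and the set of ordered pairs $(d_1,d_2)\in D\times D$ with $d_1 - d_2 = x-y$. A symmetric substitution shows $|N^-(x,y)|$ equals the number of pairs $(d_1,d_2)\in D\times D$ with $d_1 - d_2 = y-x$. Since $x \neq y$ the difference $x-y$ is a nonzero element of $\Gamma$, and $d_1 \neq d_2$ is automatic. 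So the condition $|N^+(x,y)| = |N^-(x,y)| = (n-3)/4$ for all distinct $x,y$ is equivalent to the statement that every nonzero $g \in \Gamma$ appears exactly $(n-3)/4$ times in the sequence $(d_1 - d_2)_{d_1,d_2 \in D,\,d_1 \neq d_2}$, i.e.\ the Hadamard difference-set condition.

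Combining the three equivalences yields the proposition in both directions. I do not expect any real obstacle: the argument is a clean bijective unwinding of definitions, and the only point requiring a moment of care is verifying that the pairing $z \leftrightarrow (x-z, y-z)$ genuinely matches the indexing $(d_1,d_2) \in D \times D$ with $d_1\neq d_2$ used in the definition of Hadamard difference set.
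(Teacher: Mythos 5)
Your proposal is correct and follows essentially the same route as the paper: translate the tournament condition into $\Gamma=\{0\}\sqcup D\sqcup -D$, regularity into $|D|=(n-1)/2$, and identify $|N^+(x,y)|$ (via the bijection $z\mapsto(x-z,y-z)$) with the number of representations of $x-y$ as a difference of distinct elements of $D$. The paper phrases the last step as $|N^+(x,y)|=|(x-D)\cap(y-D)|=|(x+D)\cap(y+D)|=|N^-(x,y)|$ and leaves the counting ``not so hard to check,'' which is exactly the bijective unwinding you spell out.
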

\begin{proof}
It is not so difficult to check that $Cay(\Gamma, D)$ is a tournament if and only if $\Gamma=\{0\} \sqcup D \sqcup -D$.
And from the definition of Cayley digraphs, $Cay(\Gamma, D)$ is $|D|$-regular.
Moreover, we see that for each two distinct vertices $x$ and $y$,
\[
|N^+(x, y)|=|(x-D) \cap (y-D)|=|(x+D) \cap (y+D)|=|N^-(x, y)|,
\]
where $x+D=\{x+d \mid d\in D\}$ and $x-D=\{x-d \mid d\in D\}$. 
It is not so hard to check that $|(x+D) \cap (y+D)|$ is equal to the frequency of the difference $x-y$ in $(d_1-d_2)_{d_1, d_2 \in D,\; d_1\neq d_2}$, proving the proposition.
\end{proof}
\par The constructing problem of skew Hadamard difference sets has been investigated in combinatorial design theory.
Until 2006, there had been no construction of skew Hadamard difference sets except for Paley difference sets which give Paley tournaments under Proposition~\ref{prop:cayley}.
In 2006, Ding-Yuan~\cite{DY06} constructed new infinite families of skew Hadamard difference sets in the additive group of the finite field $\mathbb{F}_{3^m}$ such that $m\geq 3$ and $m$ is odd. 
After their work, other constructions were obtained, for example, in \cite{DPW15}, \cite{FX12}, \cite{FMX15}, \cite{M13} and \cite{M10}.
Especially, in \cite{M10}, the author constructed exponentially many inequivalent skew Hadamard difference sets in the additive group $(\mathbb{Z}/p\mathbb{Z})^3$ for each prime $p \equiv 3 \pmod{4}$. 
\par Below, we prove that from the construction in \cite{M10}, we can obtain exponentially many non-isomorphic doubly regular tournaments.
Let $D_1$ and $D_2$ be skew Hadamard difference sets in $\Gamma$. Then, $D_1$ and $D_2$ are called {\it equivalent} if
there is a group automorphism $\tau$ of $\Gamma$ and an element $g \in \Gamma$ such that $D_1=\tau(D_2)+g$.
In general, $Cay(\Gamma, D_1)$ may be isomorphic to $Cay(\Gamma, D_2)$ even if $D_1$ and $D_2$ are inequivalent.
However, if $\Gamma$ is a CI-group for Cayley digraphs, the inequivalence of $D_1$ and $D_2$ implies that $Cay(\Gamma, D_1)$ and $Cay(\Gamma, D_2)$ are non-isomorphic.
Here $\Gamma$ is called {\it CI-group for Cayley digraphs} if for all two subsets $D_1$ and $D_2$, when $Cay(\Gamma, D_1)$ and $Cay(\Gamma, D_2)$ are isomorphic, 
there exists a group automorphism of $\Gamma$ which is an isomorphism between $Cay(\Gamma, D_1)$ and $Cay(\Gamma, D_2)$.

\begin{proposition}
\label{prop:ineq}
Let $\Gamma$ be a CI-group for Cayley digraphs and $D_1$ and $D_2$ inequivalent skew Hadamard difference sets in $\Gamma$.
Then $Cay(\Gamma, D_1)$ is not isomorphic to $Cay(\Gamma, D_2)$.
\end{proposition}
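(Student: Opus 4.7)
The plan is to prove the contrapositive: assuming $Cay(\Gamma, D_1) \cong Cay(\Gamma, D_2)$ as digraphs, I would deduce that $D_1$ and $D_2$ must be equivalent as skew Hadamard difference sets. The key input is the CI-group property, which upgrades an arbitrary digraph isomorphism between the two Cayley digraphs to a group automorphism of $\Gamma$ realizing the same isomorphism.

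First, I would invoke the CI-group hypothesis to obtain a group automorphism $\tau$ of $\Gamma$ that is also a digraph isomorphism from $Cay(\Gamma, D_1)$ to $Cay(\Gamma, D_2)$. Next, I would translate the edge-preservation condition into a set-theoretic statement about $D_1$ and $D_2$ using the defining property of Cayley digraphs. Since $(x, y) \in E(Cay(\Gamma, D_1))$ exactly when $x - y \in D_1$, and similarly for $D_2$, the condition that $\tau$ preserves edges reads
\[
x - y \in D_1 \iff \tau(x) - \tau(y) \in D_2
\]
for all distinct $x, y \in \Gamma$. Because $\tau$ is a group homomorphism, $\tau(x) - \tau(y) = \tau(x - y)$, so letting $z = x - y$ range over $\Gamma \setminus \{0\}$ this reduces to $z \in D_1 \iff \tau(z) \in D_2$, i.e. $D_1 = \tau^{-1}(D_2)$.

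Finally, since $\tau^{-1}$ is again a group automorphism of $\Gamma$, taking $g = 0$ in the definition of equivalence gives $D_1 = \tau^{-1}(D_2) + g$, contradicting the hypothesis that $D_1$ and $D_2$ are inequivalent. The argument is essentially a direct unpacking of the definitions, so there is no substantial obstacle; the only point requiring attention is that the CI-property is stated for arbitrary subsets of $\Gamma$ and therefore may be applied to the skew Hadamard difference sets $D_1$ and $D_2$ without any extra hypothesis.
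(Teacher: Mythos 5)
Your proposal is correct and follows essentially the same route as the paper: invoke the CI-property to replace the arbitrary isomorphism by a group automorphism $\tau$, unpack the Cayley edge condition to get $\tau(D_1)=D_2$, and conclude that $D_1$ and $D_2$ would be equivalent (with $g=0$), a contradiction. The only cosmetic difference is that you phrase edge-preservation as a single biconditional, whereas the paper derives the two inclusions $\tau(D_1)\subset D_2$ and $D_2\subset\tau(D_1)$ separately by also applying $\tau^{-1}$; both are valid.
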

\begin{proof}
Assume that $Cay(\Gamma, D_1)$ is isomorphic to $Cay(\Gamma, D_2)$.
Then, there must exist a group automorphism $\tau$ of $\Gamma$ which is an isomorphism from $Cay(\Gamma, D_1)$ to $Cay(\Gamma, D_2)$.
Since $\tau$ is a group automorphism, for any $x, y \in \Gamma$, $\tau(x-y)=\tau(x)-\tau(y)$.
And since $\tau$ is also an isomorphism, $x-y \in D_1$ implies that $\tau(x)-\tau(y) \in D_2$.
So $\tau(D_1) \subset D_2$.
Now for $d \in D_2$, let $z, w$ be vertices such that $d=z-w$ and so $(z, w)$ is an edge of $Cay(\Gamma, D_2)$.
Since $\tau^{-1}$ is an isomorphism from $Cay(\Gamma, D_2)$ to $Cay(\Gamma, D_1)$, $\tau^{-1}(z)-\tau^{-1}(w)$ should be in $D_1$.
And since $\tau^{-1}$ is also a group automorphism, $\tau^{-1}(z)-\tau^{-1}(w)=\tau^{-1}(z-w)$ and so $d=z-w \in \tau(D_1)$.
Thus $\tau(D_1)=D_2$, which contradicts the assumption of $D_1$ and $D_2$.
\end{proof}
\par The following theorem for $(\mathbb{Z}/p\mathbb{Z})^3$ can be found in \cite{AN99} (see also e.g. \cite{L02}). 
\begin{theorem}[Theorem 3.1 in \cite{AN99}]
\label{thm:CI}
Let $p$ be an odd prime. Then the additive group $(\mathbb{Z}/p\mathbb{Z})^3$ is a CI-group for Cayley digraphs.
\end{theorem}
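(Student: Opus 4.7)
The plan is to invoke Babai's classical reduction, which recasts the CI property as a permutation-group statement: a finite group $\Gamma$ is a CI-group for Cayley digraphs if and only if, for every subset $S\subset\Gamma\setminus\{0\}$, any two regular subgroups of $G:=\mathrm{Aut}(Cay(\Gamma,S))$ that are isomorphic to $\Gamma$ are conjugate inside $G$. This is the natural first step: once one can conjugate the two regular copies of $\Gamma$ inside the automorphism group of any Cayley digraph, the ``outer'' isomorphism between two Cayley digraphs can always be replaced by a group automorphism of $\Gamma$, which is precisely the CI condition used in Proposition~\ref{prop:ineq}.

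Specializing to $\Gamma=(\mathbb{Z}/p\mathbb{Z})^3$, I would fix $S$, let $R\leq G$ be the right regular representation of $\Gamma$, and let $R'\leq G$ be an arbitrary regular subgroup isomorphic to $\Gamma$. Since $|R|=|R'|=p^3$, both $R$ and $R'$ are $p$-subgroups of $G$, so by Sylow's theorem they are contained in a common Sylow $p$-subgroup $P$ of $G$. It therefore suffices to prove that any two regular elementary abelian subgroups of $P$ of order $p^3$ are $P$-conjugate; then Sylow conjugation finishes the job.

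At this point I would analyse the structure of a transitive $p$-subgroup $P$ of $\mathrm{Sym}(\Gamma)$ that contains a regular elementary abelian subgroup of order $p^3$. Writing $P=R\cdot P_0$ with $P_0$ the stabilizer of $0\in\Gamma$, one first bounds $|P_0|$ and then studies the conjugation action of $P_0$ on $R$, viewed as a $3$-dimensional $\mathbb{F}_p$-vector space. The aim is to show that up to $P$-conjugacy, $R$ is the unique regular elementary abelian subgroup of $P$ of order $p^3$; equivalently, every such $R'$ is already $P$-conjugate to $R$. The case analysis organizes itself according to the $\mathbb{F}_p[P_0]$-module structure on $R$, and in each case one constructs the conjugating element explicitly.

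The main obstacle, and the real content of the theorem, is this last structural step: ruling out ``exotic'' regular elementary abelian subgroups of $P$ that are not $P$-conjugate to $R$. The rank-$3$ hypothesis is essential here, since the analogous statement is known to fail for $(\mathbb{Z}/p\mathbb{Z})^n$ when $n$ is sufficiently large, so the proof cannot proceed by purely formal Sylow-theoretic manipulation: it must exploit the fact that a $3$-dimensional $\mathbb{F}_p$-module admits only a short, finite list of composition patterns under a $p$-group action. Carrying out this module-theoretic case analysis and explicit conjugation is the delicate work performed in \cite{AN99}.
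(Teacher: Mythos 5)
The paper does not prove this statement at all: it is imported verbatim as Theorem 3.1 of \cite{AN99} and used as a black box, so there is no internal proof to compare against. Judged as a standalone proof, your proposal has a genuine gap. You correctly identify the standard framework --- Babai's criterion reducing the CI property to the conjugacy of regular subgroups isomorphic to $\Gamma$ inside $\mathrm{Aut}(Cay(\Gamma,S))$, followed by a Sylow reduction to a single transitive $p$-subgroup $P$ and an analysis of the regular elementary abelian subgroups of $P$ --- and this is indeed the architecture of the known proofs. But everything after that reduction is the theorem. Your text explicitly says the ``module-theoretic case analysis and explicit conjugation is the delicate work performed in \cite{AN99}'': that is, the step where the rank-$3$ hypothesis is actually used, and where the argument must fail for large rank (as it does, by \cite{M03}), is not carried out. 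A proof that defers precisely the part that distinguishes $(\mathbb{Z}/p\mathbb{Z})^3$ from $(\mathbb{Z}/p\mathbb{Z})^n$ for large $n$ has not proved anything; it has restated the difficulty.

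Two smaller points. First, the Sylow step as written is slightly off: two $p$-subgroups of $G$ need not lie in a common Sylow $p$-subgroup; rather, each lies in some Sylow $p$-subgroup, and one must first conjugate $R'$ so that its image lands in the Sylow subgroup containing $R$ before arguing inside $P$. This is routine to repair but should be said. Second, showing that any two regular elementary abelian subgroups of $P$ are $P$-conjugate is stronger than what is needed (conjugacy in $G$, or even in $\langle R, R'\rangle$, suffices), and the actual arguments in \cite{AN99} are organized differently --- they proceed by an elementary, explicit analysis rather than through the $\mathbb{F}_p[P_0]$-module classification you sketch. Since the paper under review only cites this theorem, the appropriate treatment here is the citation itself; if you want to supply a proof, you must execute the case analysis, not gesture at it.
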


As a direct consequence of Proposition~\ref{prop:ineq}, Theorem~\ref{thm:DRT} and \ref{thm:CI}, we get the following corollary.

\begin{corollary}
\label{cor:exp}
From the construction in \cite{M10}, for each prime $p \equiv 3 \pmod{4}$, we obtain exponentially many non-isomorphic doubly regular tournaments with $p^3$ vertices. 
Moreover, each such doubly regular tournament $T$ satisfies 
\begin{equation*}
C(T)\leq \frac{1}{2}\binom{n}{2}+n^{\frac{3}{2}} \log_2 (2n)
\end{equation*}
where $n=p^3$.
\end{corollary}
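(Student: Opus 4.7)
The plan is to chain together the four ingredients that have already been set up in the paper. By the main result of \cite{M10}, for each prime $p \equiv 3 \pmod{4}$ there exists a collection $\mathcal{D}_p$ of pairwise inequivalent skew Hadamard difference sets in the additive group $\Gamma=(\mathbb{Z}/p\mathbb{Z})^3$ whose size grows exponentially in $p$. This existence result is the only ingredient I would take as a black box; everything else in the corollary is purely a bookkeeping consequence of results already proved.

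First I would convert the combinatorial object into the graph-theoretic object. For each $D \in \mathcal{D}_p$, Proposition~\ref{prop:cayley} guarantees that $Cay(\Gamma, D)$ is a doubly regular tournament on $|\Gamma|=p^3$ vertices. This produces a family $\{Cay(\Gamma, D) : D \in \mathcal{D}_p\}$ of doubly regular tournaments of the required size and cardinality.

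Next I would verify that these tournaments are pairwise non-isomorphic. Theorem~\ref{thm:CI} asserts that $\Gamma=(\mathbb{Z}/p\mathbb{Z})^3$ is a CI-group for Cayley digraphs, so Proposition~\ref{prop:ineq} applies: for any two distinct $D_1, D_2 \in \mathcal{D}_p$, which are inequivalent as skew Hadamard difference sets by construction, the Cayley digraphs $Cay(\Gamma, D_1)$ and $Cay(\Gamma, D_2)$ are non-isomorphic. Hence we obtain exponentially many pairwise non-isomorphic doubly regular tournaments on $p^3$ vertices.

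Finally I would invoke Theorem~\ref{thm:DRT}, which gives the bound
\[
C(T) \leq \tfrac{1}{2}\binom{n}{2}+n^{\frac{3}{2}}\log_2(2n)
\]
for every doubly regular tournament on $n$ vertices; applying it with $n=p^3$ to each member of the family finishes the proof. There is no real obstacle here, since each step is a direct citation of an earlier statement; the only substantive content hidden in the argument is the exponential lower bound on $|\mathcal{D}_p|$ supplied by \cite{M10}, which the corollary explicitly imports.
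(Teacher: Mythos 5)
Your proposal is correct and follows exactly the route the paper intends: the corollary is stated there as a direct consequence of the exponentially many inequivalent skew Hadamard difference sets from \cite{M10}, Proposition~\ref{prop:cayley}, Proposition~\ref{prop:ineq} combined with Theorem~\ref{thm:CI}, and Theorem~\ref{thm:DRT}. Nothing is missing.
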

We remark that known other constructions are usually in the additive group of a finite field of odd characteristic $p \equiv 3 \pmod{4}$ 
which is the elementary abelian group $(\mathbb{Z}/p\mathbb{Z})^k$ for some $k \geq 1$.
It is known that $(\mathbb{Z}/p\mathbb{Z})^k$ is a CI-group when $k \leq 4$.
However, in general, $(\mathbb{Z}/p\mathbb{Z})^k$ is not a CI-group when $k$ is sufficiently large (see \cite{M03} and \cite{L02}).
Since $p^k$ should be of the form $p^k \equiv 3 \pmod{4}$ and there is no skew Hadamard difference set in a cyclic group except for the Paley's difference set 
(see \cite{J66}), at this point, we can check the isomorphism problem as Proposition~\ref{prop:ineq} only for the case of $k=3$.
    



\end{document}